\theoremstyle{plain}
\newtheorem{theorem}{Theorem}
\numberwithin{equation}{section}
\newcommand{\R}{\mathbb{R}}
\newcommand{\G}{\Gamma}
\begin{document}

\title {The closed graph theorem is the open mapping theorem}

\date{}

\author [R.S Monahan and P.L. Robinson]{R.S. Monahan and P.L. Robinson}

\address{Department of Mathematics \\ University of Florida \\ Gainesville FL 32611  USA }

\email[]{monahanrs@ufl.edu, paulr@ufl.edu}

\subjclass{} \keywords{}

\begin{abstract}

We extend the closed graph theorem and the open mapping theorem to a context in which a natural duality interchanges their extensions. 

\end{abstract}

\maketitle

\section*{Introduction} 

\medbreak 

It is a familiar fact that the two fundamentally important theorems to which we refer in our title are closely related, both to each other and to the Bounded Inverse Theorem. This general understanding is fairly represented in Pryce [5] page 146: speaking of the Closed Graph Theorem, the author says `We derive it as a consequence of the Open Mapping Theorem; but ... the process can be reversed, so that these two apparently very different theorems are equivalent'. It is more strikingly presented in Pietsch [4] page 43: `We now discuss a fundamental result, which has many facets. ... Once we have proved one of the above conclusions, the others follow immediately'; here, the ellipsis covers statements of the closed graph theorem and the open mapping theorem, along with the bounded inverse theorem. 

\medbreak 

Our purpose in this note is to record that when suitably generalized, the closed graph theorem (CGT) and the open mapping theorem (OMT) become more than `equivalent' in the generally understood sense that either may be derived from the other: their suitable generalizations are simply two sides of the same coin, related by a natural duality. The key step in our generalization is the replacement of linear operators from a Banach space $X$ to a Banach space $Y$ by linear relations between the spaces; that is, by linear subspaces of the Banach space $X \times Y$. The natural duality that mediates between the relational CGT and the relational OMT is simply the switching of factors in $X \times Y$. In addition to deriving these relational theorems from their classical versions, we provide a stand-alone proof. 

\medbreak 

\section*{CGT $\equiv$ OMT}

\medbreak 

Let $X$ and $Y$ be Banach spaces. In their traditional forms, the closed graph theorem and the open mapping theorem address a linear operator $L$ from $X$ to $Y$. The classical CGT gives a sufficient condition for $L$ to be continuous: explicitly, $L: X \to Y$ is continuous if its graph $\{ (x, Lx) : x \in X \}$ is a closed subset of $X \times Y$. The classical OMT gives a sufficient condition for $L$ to be an open mapping: explicitly, $L: X \to Y$ is open if it is both continuous and surjective. The classical bounded inverse theorem asserts that if $L$ is a linear isomorphism then its continuity entails that of its inverse. It is customary to prove all three of these theorems together, first establishing one of them as a consequence of the Baire category theorem and then deriving the others from that one. It is in this relatively informal sense that these three theorems are frequently spoken of as being `equivalent': each of them may be derived from each of the others. We claim that passage from the context of linear operators to the broader context of linear relations yields extended versions of CGT and OMT that are actually equivalent in a very precise sense: the canonical flip $X \times Y \leftrightarrow Y \times X$ interchanges them.

\medbreak 

Let $\G$ be a linear relation from $X$ to $Y$: that is, let $\G$ be a linear subspace of the Banach space $X \times Y$.

\medbreak 

When $B \subseteq Y$ we define 
$$\G B = \{ x \in X : (\exists b \in B) \; (x, b) \in \G \} \subseteq X.$$
In particular, the {\it domain} of $\G$ is the subspace 
$${\rm Dom} \, \G = \G Y \subseteq X.$$
When $A \subseteq X$ we define 
$$A \G = \{ y \in Y : (\exists a \in A) \; (a, y) \in \G \} \subseteq Y.$$
In particular, the {\it range} of $\G$ is the subspace 
$${\rm Ran} \, \G = X \G \subseteq Y.$$
Equivalently, ${\rm Dom} \, \G$ is the image of $\G$ under first-factor projection $\pi_X : X \times Y \to X$; likewise, ${\rm Ran} \, \G$ is the image of $\G$ under second-factor projection $\pi_Y : X \times Y \to Y$. 

\medbreak 

As a special case, $\G$ may be the graph of a linear operator $L : X \to Y$. In this case, $A \G \subseteq Y$ is the direct image of $A \subseteq X$ under $L$ and $\G B \subseteq X$ is the inverse image of $B \subseteq Y$ under $L$. 

\medbreak 

We now state and prove the versions of the closed graph theorem and open mapping theorem that are appropriate to relations. In these theorems, $\G$ will be a closed subspace of $X \times Y$ and hence a Banach space in its own right. Our first appproach to each theorem will be to show how it follows from the corresponding classical version for operators. 

\medbreak 

The `closed graph theorem' for relations is as follows. 

\medbreak 

\begin{theorem} \label{CGT}
Let $\G \subseteq X \times Y$ be a closed linear relation with ${\rm Dom} \, \G = X$. If $B \subseteq Y$ is open then $\G B \subseteq X$ is open. 
\end{theorem}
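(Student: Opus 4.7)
The plan is to reduce this to the classical open mapping theorem applied to the first-factor projection restricted to $\G$. Since $\G$ is a closed linear subspace of the Banach space $X\times Y$, it is itself a Banach space. Write $p = \pi_X|_\G : \G \to X$ and $q = \pi_Y|_\G : \G \to Y$ for the two coordinate projections restricted to $\G$; both are continuous linear maps. The domain hypothesis ${\rm Dom}\,\G = X$ says precisely that $p$ is surjective, so the classical OMT applies to $p$ and yields that $p : \G \to X$ is an open mapping.

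The next step is to identify $\G B$ as the image under $p$ of a suitable open subset of $\G$. Directly from the definitions,
$$\G B = \{x \in X : (\exists b \in B)\; (x,b) \in \G\} = p\bigl(\G \cap (X \times B)\bigr) = p\bigl(q^{-1}(B)\bigr).$$
When $B \subseteq Y$ is open, continuity of $q$ ensures that $q^{-1}(B)$ is open in $\G$, and then openness of $p$ ensures that $p(q^{-1}(B)) = \G B$ is open in $X$.

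No step here looks delicate: the only substantive input is the classical open mapping theorem, and the only thing to verify carefully is the set-theoretic identity $\G B = p(q^{-1}(B))$, which is essentially tautological. If there is any conceptual obstacle, it is the initial recognition that the relational CGT is literally a restatement of classical OMT for the projection $\pi_X : \G \to X$; once this identification is made, the argument writes itself. A companion remark worth including in the write-up is that the symmetric role of $p$ and $q$ here is exactly what will make the factor-swap $X \times Y \leftrightarrow Y \times X$ interchange this theorem with the relational OMT promised in the introduction.
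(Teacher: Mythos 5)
Your proof is correct, but it takes a genuinely different route from the paper's. The paper proves this theorem from the \emph{classical closed graph theorem}: it forms the closed subspace $Y_0 = \{ y \in Y : (0,y) \in \G \}$, observes that the hypothesis ${\rm Dom}\,\G = X$ makes $\G$ the graph of a well-defined linear map $\ell : X \to Y/Y_0$ with closed graph, applies the classical CGT to conclude $\ell$ is continuous, and then identifies $\G B$ as $\ell^{-1}$ of the (open) image of $B$ in the quotient. What you do instead is apply the \emph{classical open mapping theorem} to the projection $\pi_X|_\G : \G \to X$, which is surjective precisely because ${\rm Dom}\,\G = X$, and then read off $\G B$ as the image of the open set $\G \cap (X \times B)$ under this open map. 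Your argument is shorter and arguably cleaner, and the set-theoretic identity you isolate is indeed tautological; in fact it is, almost verbatim, the paper's own proof of its Theorem 2 (the relational OMT) transported across the flip $X \times Y \leftrightarrow Y \times X$ --- which is consistent with your closing remark and with the paper's thesis that the two relational theorems are a single statement. What your route gives up is the expository point the paper is making at this stage: the authors deliberately derive each relational theorem from its \emph{corresponding} classical namesake (relational CGT from classical CGT, relational OMT from classical OMT), so that each is seen to be a genuine generalization of its classical version before the duality is invoked. Your proof instead derives the relational CGT from the classical OMT, which is mathematically unimpeachable given the known equivalence of the classical theorems, but collapses the two derivations the paper keeps separate.
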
 

\begin{proof} Notice that, as $\G \subseteq X \times Y$ is closed, the subspace $Y_0 = \{ y \in Y : (0, y) \in \G \}$ is closed. Now, let $x \in X = {\rm Dom} \, \G$: there exists $y \in Y$ such that $(x, y) \in \G$; if also $(x, y') \in \G$ then $(0, y' - y) = (x, y') - (x, y) \in \G$ so that $y' - y \in Y_0$.  The resulting well-defined linear map 
$$\ell : X \to Y/Y_0 : x \mapsto y + Y_0$$
has closed graph, as $\G$ is closed; by the classical CGT, $\ell$ is therefore continuous. As is readily verified, if $B \subseteq Y$ then $\G B$ is exactly the inverse image of $B + Y_0 \subseteq Y/Y_0$ under $\ell$. Finally, if $B$ is open then $ B + Y_0$ is open (quotient maps are open) so that $\G B$ is open ($\ell$ being continuous). 

\end{proof} 

\medbreak 

In particular, this theorem applies to the graph of a linear operator $L: X \to Y$: in this case, it says that if $L$ has closed graph then $L$ is continuous; that is, Theorem \ref{CGT} reduces exactly to the classical closed graph theorem. We remark that in the proof of Theorem \ref{CGT}, $Y_0 = 0 \G$ and $y + Y_0 = x \G$ as sets; though this circumstance does not appear to simplify matters to any appreciable degree. 

\medbreak 

The `open mapping theorem' for relations is as follows. 

\medbreak 

\begin{theorem} \label{OMT}
Let $\G \subseteq X \times Y$ be a closed linear relation with ${\rm Ran} \, \G = Y$. If $A \subseteq X$ is open then $A \G \subseteq Y$ is open. 
\end{theorem}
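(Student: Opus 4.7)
The plan is to exploit the duality advertised in the title and deduce Theorem \ref{OMT} directly from Theorem \ref{CGT} by flipping factors. Let the flip map $\tau : X \times Y \to Y \times X$ send $(x,y) \mapsto (y,x)$, and set $\Gamma^\flat = \tau(\Gamma) \subseteq Y \times X$. Since $\tau$ is a linear homeomorphism, $\Gamma^\flat$ is a closed linear subspace of $Y \times X$.

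Next I would translate the hypotheses and the conclusion across $\tau$. By the definitions given, the domain of $\Gamma^\flat$ (as a linear relation from $Y$ to $X$) is the projection of $\Gamma^\flat$ onto its first factor, which is the projection of $\Gamma$ onto its second factor, namely ${\rm Ran}\,\Gamma = Y$. So $\Gamma^\flat$ has full domain. Similarly, for any $A \subseteq X$, the set $\Gamma^\flat A$ — computed in the sense of the excerpt's $\Gamma B$ notation applied to $\Gamma^\flat$ and $A$ — equals $\{ y \in Y : \exists a \in A,\ (y,a) \in \Gamma^\flat \} = \{ y \in Y : \exists a \in A,\ (a,y) \in \Gamma \} = A\Gamma$.

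With those identifications in place, the theorem is immediate: given open $A \subseteq X$, applying Theorem \ref{CGT} to the closed full-domain relation $\Gamma^\flat \subseteq Y \times X$ yields that $\Gamma^\flat A$ is open in $Y$; but $\Gamma^\flat A = A\Gamma$, so $A\Gamma$ is open, as required.

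The only real work is the bookkeeping verification in the middle paragraph — making sure that under the flip the roles of ${\rm Dom}$ and ${\rm Ran}$ swap, and that the two set-valued operations $\Gamma B$ and $A\Gamma$ are interchanged. This is a routine unpacking of the definitions rather than a substantive obstacle, and it is precisely this painless passage between the two theorems that justifies the paper's claim that CGT and OMT for relations are two sides of the same coin.
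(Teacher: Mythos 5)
Your proof is correct, but it takes a different route from the paper's own proof of Theorem \ref{OMT}. The paper derives Theorem \ref{OMT} directly from the \emph{classical} open mapping theorem: the restriction $\pi_Y|_{\G}$ of the second-factor projection is a continuous surjective linear map from the Banach space $\G$ onto $Y$, hence open, and $A\G = \pi_Y|_{\G}[(A \times Y) \cap \G]$ is then open. You instead deduce Theorem \ref{OMT} from Theorem \ref{CGT} via the flip $\tau : X \times Y \to Y \times X$, and your bookkeeping is right: ${\rm Dom}\,\tau(\G) = {\rm Ran}\,\G$ and $\tau(\G)A = A\G$, so the relational CGT applied to $\tau(\G)$ gives exactly the conclusion. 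This is precisely the duality the paper itself points out immediately \emph{after} both proofs (``the flip $\ldots$ transforms the one into the other''), so your argument is fully endorsed by the authors --- it is just deployed as the proof rather than as a remark. The trade-off: the paper's arrangement derives each relational theorem from its classical namesake, exhibiting each as a genuine generalization, and only then observes that the two are interchanged by the flip; your arrangement is logically more economical (Theorem \ref{OMT} becomes a one-line corollary of Theorem \ref{CGT}) but makes both relational theorems rest ultimately on the classical closed graph theorem alone, and forgoes the independent projection argument. Both are valid; the paper also offers a third, ``even-handed'' proof via Zabreiko's lemma that favours neither side of the duality.
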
 

\begin{proof} 
The restriction $\pi_Y|_{\G}$ of the (continuous) linear projection $\pi_Y : X \times Y \to Y$ is surjective because ${\rm Ran} \, \G = Y$; by the classical OMT, $\pi_Y|_{\G}$ is therefore open. Now, if $A \subseteq X$ is open then $(A \times Y) \cap \G \subseteq \G$ is open, whence the openness of the map $\pi_Y|_{\G}$ implies the openness of the set 
$$\pi_Y|_{\G} [(A \times Y) \cap \G] = A \G.$$ 
\end{proof} 

\medbreak 

In particular, we may recover the classical open mapping theorem: let $L : X \to Y$ be a continuous linear map that is surjective and let $\G$ be its graph; then $\G$ is closed with ${\rm Ran} \, \G = Y$ and Theorem \ref{OMT} implies that $L$ maps open sets to open sets. In connexion with the proof of Theorem \ref{OMT}, see also [2] Section 11 Problem E, which was noted after the present paper was completed.  

\medbreak 

It is at once clear that Theorem \ref{CGT} and Theorem \ref{OMT} are but two manifestations of one theorem: the flip $X \times Y \to Y \times X : (x, y) \mapsto (y, x)$ transforms the one into the other; otherwise said, the one theorem applied to a closed linear relation is the other theorem applied to its transpose (or dual). Accordingly, it is natural to seek a proof of this `one' theorem that favours neither the classical CGT nor the classical OMT. 

\medbreak 

Such an even-handed proof is furnished by the Zabreiko lemma, a suitable version of which we now recall. Say that the seminorm $p$ on the Banach space $Z$ is {\it countably subadditive} precisely when 
$$p( \sum_n z_n) \leqslant \sum_n p(z_n)$$
for each convergent series $\sum_n z_n$ in $Z$; of course, this inequality need only be checked when the series on the right is convergent.  

\medbreak 

\noindent 
{\bf Lemma (Zabreiko).} {\it On a Banach space, each countably subadditive seminorm is continuous}. 

\bigbreak 

See [6] for a contemporaneous English translation of the original Russian paper of Zabreiko. See also Megginson [3] for a more recent account of the lemma and some of its uses. Of course, a (weak) version of the Baire category theorem lies behind the lemma. 

\medbreak 

Our even-handed proof of Theorem \ref{CGT} and Theorem \ref{OMT} is as follows, taking the `one' theorem in its second manifestation. Let the linear relation $\G \subseteq X \times Y$ be closed and let ${\rm Ran} \, \G = Y$. Define $p : Y \to \R$ by the rule that if $y \in Y$ then 
$$p(y) = \inf \{ ||x|| : (x, y) \in \G \}.$$ 
Let the series $\sum_{n > 0} y_n$ be convergent in $Y$. When $\varepsilon > 0$ is given, choose $x_n \in \G y_n$ such that $||x_n || < p(y_n) + \varepsilon / 2^n$;  it follows that $\sum_n ||x_n || < \sum_n p(y_n) + \varepsilon$, whence the series $\sum_{n > 0} x_n$ is absolutely convergent and therefore convergent in the Banach space $X$. Denote the partial sums of the series $\sum_n x_n$ and $\sum_n y_n$ by  $x_{(n)} = x_1 + \dots + x_n$ and $y_{(n)} = y_1 + \dots + y_n$; denote the sums of the series by $x$ and $y$. As $\G$ is closed, from $(x_{(n)}, y_{(n)}) \in \G$ it follows that $(x, y) \in \G$; consequently, 
$$p(y) \leqslant ||x|| \leqslant \sum_n ||x_n|| < \sum_n p(y_n) + \varepsilon.$$  
The given $\varepsilon > 0$ being arbitrary, we deduce that $p$ is a countably subadditive seminorm. According to the Zabreiko lemma, $p$ is continuous. Let $X_1 \subseteq X$ be the open unit ball: then 
$$X_1 \G = \{ b \in Y : (\exists a \in X_1) (a, b) \in \G \} = \{ b \in Y : p(b) < 1 \} \subseteq Y$$
is open. Finally, let $U \subseteq X$ be an arbitrary open set and let $v \in U \G$; say $u \in U$ and $(u, v) \in \G$. If $r > 0$ is so chosen that $u + r X_1 = B_r(u) \subseteq U$ then $u \G + r X_1 \G \subseteq Y$ is open and (because $\G \subseteq X \times Y$  is linear) 
$$v \in u \G + r X_1 \G \subseteq (u + r X_1) \G = B_r(u) \G \subseteq U \G.$$
This proves that if $U$ is open then $U \G$ is open, concluding our even-handed proof of Theorem \ref{OMT} or equivalently of Theorem \ref{CGT}. 

\medbreak 

We may take the even-handed approach further (perhaps too far). Let $\G \subseteq X \times Z \times Y$ be a linear {\it ternary} relation between Banach spaces. When $(x, y) \in X \times Y$ we shall write 
$$x \G y = \{ z \in Z : (x, z, y) \in \G \};$$
when $W \subseteq Z$ we shall write 
$$\G W \G = \{ (x, y) \in X \times Y : W \cap \, x \G y \neq \emptyset \}.$$ 

\medbreak 

With this understanding, the `closed graph theorem' and the `open mapping theorem' are special cases of the following theorem. 

\medbreak 

\noindent
{\bf Theorem.} {\it Let the projection $X \times Z \times Y \to X \times Y$ be surjective when restricted to the closed linear subspace $\G \subseteq X \times Z \times Y$. If $W \subseteq Z$ is open then $\G W \G \subseteq X \times Y$ is open.}

\begin{proof} 
Transplant the proof displayed ahead of the theorem, with appropriate modifications. The rule 
$$p(x, y) = \inf \{||z|| : z \in x \G y \}$$
defines on $X \times Y$ a seminorm that is countably subadditive and hence (Zabreiko) continuous. If $Z_1$ is the open unit ball in $Z$ then 
$$\G Z_1 \G = \{ (x, y) : p(x, y) < 1 \}$$ 
is therefore open in $X \times Y$. Finally, let $W \subseteq Z$ be open and let $(x, y) \in \G W \G$; choose $w \in W \cap \, x \G y$ and choose $r > 0$ so that $w + r Z_1 \subseteq W$; then  
$$(x, y) \in \G w \G + r \G Z_1 \G \subseteq  \G (w + r Z_1) \G \subseteq \G W \G$$  
whence we conclude that $\G W \G \subseteq X \times Y$ is open. 
\end{proof} 

\medbreak 

When $Y = 0$ we recover Theorem \ref{CGT}: dropping the zero factors, the projection $X \times Z \to X$ is surjective on $\G$ and $\G W \G$ becomes $\G W$. 

\medbreak 

When $X = 0$ we recover Theorem \ref{OMT}: dropping the zero factors, the projection $Z \times Y \to Y$ is surjective on $\G$ and $\G W \G$ becomes $W \G$. 

\medbreak 

\section*{Remarks} 

\medbreak 

Our even-handed proof of Theorem \ref{OMT} is in essence the proof of the classical OMT for linear operators presented as Theorem 1.6.5 in Megginson [3] but modified for transport to the context of linear relations. 

\medbreak 

An alternative even-handed proof derives from notions related to convex series as presented in Section 22 of Jameson [1]; especially, Theorem 22.7 of [1] is clearly ripe for application to the present paper. 

\medbreak 

Lastly, we remark that wider territory can be examined from the point of view expressed in the present paper: for example, the closed graph theorem and the open mapping theorem are valid for complete metrizable topological vector spaces.  

\medbreak 

\bigbreak 

\begin{center} 
{\small R}{\footnotesize EFERENCES}
\end{center} 
\medbreak 

[1] G.J.O. Jameson, {\it Topology and Normed Spaces}, Chapman and Hall (1974). 
\medbreak 

[2] J.L. Kelley and I. Namioka, {\it Linear Topological Spaces}, Van Nostrand (1963). 

\medbreak 

[3] R.E. Megginson, {\it An Introduction to Banach Space Theory}, Graduate Texts in Mathematics {\bf 183}, Springer (1998). 

\medbreak 

[4] A. Pietsch, {\it History of Banach Spaces and Linear Operators}, Birkh\"auser (2007). 

\medbreak 

[5] J.D. Pryce, {\it Basic Methods of Linear Functional Analysis}, Hutchinson (1973). 

\medbreak 

[6] P.P. Zabreiko, {\it A theorem for semiadditive functionals}, Functional Analysis and its Applications {\bf 3} (1969) 70-72. 

\medbreak

\end{document}